 \newtheorem{thm}{Theorem}[section]
 \newtheorem{lemma}[thm]{Lemma}
\theoremstyle{definition}
 \newtheorem{defn}[thm]{Definition}
  \newtheorem{ex}[thm]{Examples}
\newcommand{\N}{\mathbb{N}}
\def\E{{\text{E}}}
\def\oI{\mathscr{I}}
\newcommand{\g}{\Gamma}
\def\S{{\text{S}}}
\def\D{{\text{D}}}
\def\Hom{{\text{Hom}}}
\def\U{{\text{U}}}
\def \cU{\mathscr{U}}
\def\cC{{\mathscr{C}}}
\def\sB{{\mathscr{B}}}
\def\sA{{\mathscr{A}}}
\def\cB{{\mathcal{B}}}
\def\cA{{\mathscr{A}}}
\def\cS{\mathscr{S}}
\def\cTg{\mathscr{T}_G}
\def\cT{\mathscr{T}}
\def\cW{\mathscr{W}}
\def\cR{\mathscr{R}}
\def\cF{\mathscr{F}}
\def\cV{\mathscr{V}}
\def\cD{\mathscr{D}}
\def\cA{\mathcal{A}}
\def\Gcat{\text{$G$Cat}}
\def\u#1{\underline{#1}}
\def\sS{\underline{S}}
\def\h#1{\hat{#1}}
\def\b#1{\textbf{#1}}
\def\cI{\mathcal{I}}
\def\cU{\mathcal{U}}
\def\cA{\mathscr{A}}
\def\cU{\mathscr{U}}
\def\Aut{\text{Aut}}
\begin{document}

\title{A  short treatise on  Equivariant $\g$-spaces }
\author{Rekha Santhanam}
\begin{abstract}
Equivariant $\g$-spaces model equivariant infinite loop spaces. In this article, we show that there exists a connective Quillen equivalence between the category of equivariant $\g$-spaces and the category of orthogonal spectra. \end{abstract}
\maketitle
\section{Introduction}
 We begin this article by defining a level and stable model structures on the category of  equivariant $\g$-spaces. This is called the projective level  and stable model structure in \cite{Ostermayr}. Ostermayr\cite{Ostermayr}, shows that there is a connective Quillen equivalence with the category of equivariant symmetric spectra with stable model structure. The setup is in the category of simplicial sets. In this article, we show that there is a connective Quillen equivalence between the category of equivariant $\g$-spaces with projective stable model structure and the category of equivariant orthogonal spectra with stable structure. By spaces, we mean the category of compactly generated Hausdorff topological spaces.   We conclude with remarks  on what is  known so far about $\Gamma$-$G$-categories.

Throughout this article we will assume that $G$ is a finite group. 
\section{Equivariant $\g$-spaces }
Let $G \cT$ denote the category of based $G$-topological spaces with continuous $G$-maps. Let $\cTg$  denote the $G$-enriched  category of $G$-topological spaces with the set of morphisms given by all maps and $G$-conjugation action. Let $\g$ be the category of finite pointed sets, $G\g$ denote the category of finite pointed $G$-sets and $\g_G$ the $G$-enriched category of finite pointed $G$-sets with the set of morphisms being all set maps with $G$-conjugation action. Let $G \cW$ denote the category of $G$-CW complexes with continuous $G$-maps.

Define an equivariant $\g$-space $X$ to be a covariant functor from the category of $X: \g \to G\cT$ such that $X(\b 0)$ is a point.  Shimakawa \cite{Shimakawa} originally defines an equivariant $\g$-space as a $G$-functor from $\g_G \to \cT_G$.  As observed in \cite{Shimakawa2} these two categories are equivalent. 

Let $\E$ denote the prolongation functor from $\g[G\cT] \to G\g[G\cT]$. A special equivariant $\g$-space is an equivariant $\g$-space such that for every $G$-set $S$, the map 
$$ \E X(S) \to G\cT(S,\E X(\b1))$$ is a $G$-equivalence. 

Note that  if $X$ is special then $\pi_0(X(\b 1)^H)$ is a monoid via the map $ X(\b 1)^H \times X(\b 1)^H  \xleftarrow{\simeq} X(\b 2)^H \xrightarrow{X(\Delta)} X(\b 1)^H$ where, $\Delta : \b 2 \to \b 1$ is defined by $\Delta(\{1,2\})=\{1\}$. A special equivariant $\g$-space is said to be very-special if $X(\b 1)^H$ is group-like for all $H <G$. 

A map of $G$-spaces $X \to Y$ is said to be $G$-weak equivalence ($G$-fibration) if for every $H<G$ the map $X^H \to Y^H$ is a weak equivalence (Serre fibration). 

\begin{thm}
The category $G\g[G\cT]$ is a cofibrantly generated model category with level model structure where, 
\begin{itemize}
\item weak equivalences are levelwise $G$-weak equivalences. 
\item fibrations are levelwise $G$-fibrations and,
\item cofibrations are q-cofibrations, that is, the morphisms with right lifting property with respect to all acyclic fibrations. 
\end{itemize}
The generating  acylic cofibrations are given by $J= \{G\g_{S} \wedge G/H \times \D^n \to   G\g_{S} \wedge G/H \times \D^n \times I \ | \ n \in \N, H <G \} $ and the generating cofibrations are given by $I=\{G\g_{S} \wedge G/H \times \S^{n-1} \to   G\g_{S} \wedge G/H \times \D^n \ | \ n \in \N, H <G  \} $.
\end{thm}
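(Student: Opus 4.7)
The strategy is to apply Kan's transfer (recognition) theorem in order to lift the standard equivariant model structure on $G\cT$ to the diagram category $G\g[G\cT]$ via the family of free--evaluation adjunctions. This is an instance of the general existence result for projective model structures on diagram categories valued in a cofibrantly generated model category.

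First I record the setup. The category $G\cT$ carries the equivariant model structure whose weak equivalences and Serre fibrations are detected on $H$-fixed points for every $H<G$, with generating cofibrations $I_{G\cT}=\{G/H\times\S^{n-1}\to G/H\times\D^n\}$ and generating acyclic cofibrations $J_{G\cT}=\{G/H\times\D^n\to G/H\times\D^n\times I\}$ for $n\in\N$ and $H<G$. For each object $S$ in (a skeleton of) $G\g$, evaluation $\text{ev}_S:G\g[G\cT]\to G\cT$ admits a left adjoint $F_S=G\g_S\wedge(-)$, and the putative generating sets of the theorem are precisely $I=\bigcup_S F_S(I_{G\cT})$ and $J=\bigcup_S F_S(J_{G\cT})$.

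Next I verify the hypotheses of Kan's theorem. Bicompleteness of $G\g[G\cT]$ is immediate because limits and colimits in a functor category are formed levelwise from $G\cT$. Smallness of the domains of $I$ and $J$ holds since these domains are obtained by applying the left adjoints $F_S$ to compact $G$-CW complexes and colimits in $G\g[G\cT]$ are computed levelwise. The crux, and the step I expect to demand the most care, is showing that every relative $J$-cell complex is a levelwise $G$-weak equivalence. The idea is that evaluation at any $T\in G\g$ satisfies $\text{ev}_T F_S(X)=G\g(S,T)\wedge X$, and since $G\g(S,T)$ is a based finite $G$-set, $\text{ev}_T\circ F_S$ sends a map to a $G$-permuted wedge of copies of it. Evaluating a relative $J$-cell complex at $T$ therefore gives a transfinite composition of pushouts of wedges of generating acyclic cofibrations in $G\cT$; such a map lies in the saturation of $J_{G\cT}$, hence is an acyclic cofibration in $G\cT$, and in particular a $G$-weak equivalence. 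This reduces the crucial hypothesis to standard closure properties of the equivariant model structure on $G\cT$ (wedges, pushouts, and transfinite compositions of acyclic cofibrations remain acyclic cofibrations).

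With all three hypotheses verified, Kan's theorem produces the cofibrantly generated model structure on $G\g[G\cT]$ with $I$ and $J$ as generating (acyclic) cofibrations, weak equivalences and fibrations characterized levelwise, and cofibrations characterized by the left lifting property against the levelwise acyclic fibrations, matching the statement exactly.
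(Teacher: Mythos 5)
Your proposal is correct, and it is essentially the argument that underlies the paper's one-line proof: the paper simply cites \cite{RS}[Thm 5.3], a general level (projective) model structure result for equivariant diagram categories, which is itself established by exactly the kind of transfer along the free--evaluation adjunctions $F_S \dashv \mathrm{ev}_S$ that you carry out. So the difference is one of packaging rather than of mathematical route: the paper outsources the verification to a previously proved general theorem, while you reprove it directly via Kan's transfer theorem, correctly identifying the crucial point that $\mathrm{ev}_T F_S(X) = G\g(S,T)\wedge X$ is a finite wedge of copies of $X$ (possibly with permuted $G$-action), so that relative $J$-cell complexes evaluate to acyclic cofibrations in $G\cT$ and are in particular levelwise $G$-weak equivalences. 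Two small points deserve care if you wanted your version to be fully rigorous in the topological setting. First, domains of $I$ and $J$ are not small with respect to arbitrary maps of spaces; one should use compactness relative to relative cell complexes (equivalently, run the compactly generated form of the small object argument, which is exactly what the cofibration hypothesis quoted in the paper's appendix, following \cite{MMSS}, is designed to guarantee). Second, be explicit about which morphism set $G\g(S,T)$ carries which $G$-action (trivial action for equivariant maps in $G\g$, conjugation action in the enriched variant $\g_G$); in either case it is a finite based $G$-set, and smashing a generating acyclic cofibration $G/K\times \D^n \to G/K\times\D^n\times I$ with a finite based $G$-set again lies in the saturation of $J_{G\cT}$, so your reduction goes through. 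With those points made explicit, your proof is a complete and self-contained substitute for the citation.
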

\begin{proof}
Follows from \cite{RS}[Thm 5.3]
\end{proof}
Let  $X: G \g \to G \cT$   be a functor.  Let $\E X$ denote the prolongation of $X$ to $G\cW$.  Let $\cU$ be a $G$-universe. Then  $(\E X(S^V))_{V \in \cU}$ defines an equivariant spectrum.  
A morphism $X \to Y$ in $G\g[G\cT]$ is said to be a stable $G$-equivalence if  $\E X(S^V) \to \E Y(S^V)$ is a $\pi^H_\ast$-isomorphism for all $H<G$.

\begin{thm}
Localizing with respect to all stable equivalences gives a cofibrantly generated  stable model structure on $G\g[G\cT]$ where,
\begin{itemize}
\item weak equivalences are stable equivalences. 
\item cofibrations are q-cofibrations and,
\item fibrations are q-fibrations, that is, morphisms with right lifting property with respect to all acyclic q-cofibrations 
\end{itemize}
\end{thm}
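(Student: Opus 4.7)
The plan is to obtain the stable model structure as a left Bousfield localization of the level model structure of Theorem 2.1. First, I would identify an appropriate set $S$ of maps to localize at, chosen so that the $S$-local equivalences recover the stable equivalences defined via the prolongation $\E$. A natural candidate consists of the stabilization maps built from representation spheres: for each subgroup $H<G$, each finite-dimensional orthogonal $H$-representation $V$, and each finite pointed $G$-set $T$, include (a cofibrant model of) the adjoint unit
$$G\g_T \wedge (G/H)_+ \longrightarrow \Omega^V\bigl(G\g_T \wedge (G/H)_+ \wedge S^V\bigr).$$
An $S$-local object $X$ is then precisely one whose prolongation $(\E X(S^V))_{V \in \cU}$ is a genuine equivariant $\Omega$-spectrum.

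Second, I would verify the hypotheses of Hirschhorn's left Bousfield localization theorem for the level model structure. Cofibrant generation is supplied by Theorem 2.1; it remains to check left properness and cellularity. Left properness follows because the $q$-cofibrations are levelwise relative $G$-CW inclusions, along which level $G$-equivalences are preserved under pushout. Cellularity is routine for the given generators, which are representable functors smashed with compact cells. Hirschhorn's theorem then produces a model structure with the same cofibrations as the level structure, with weak equivalences the $S$-local equivalences, and with fibrations those maps having the right lifting property against an enlarged set $J \cup K$, where $K$ consists of horns on the maps in $S$.

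Third, and crucially, I would identify the $S$-local equivalences with the stable equivalences. One direction is routine: $S$-local objects prolong to equivariant $\Omega$-spectra, so level equivalences between them are stable equivalences, and in general an $S$-local equivalence becomes a level equivalence after $S$-local replacement. For the converse, I would construct an explicit stably fibrant replacement $L$ by iterating the assignment $X \mapsto \Omega^V \E X(S^V \wedge -)$ over a cofinal sequence of representations in the universe $\cU$ and taking the filtered colimit. The unit $X \to LX$ should be both a stable equivalence (by construction, as it realizes spectrification at the level of associated spectra) and an $S$-local equivalence, forcing the two classes of weak equivalences to coincide.

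The main obstacle is this third step, the identification of the two notions of weak equivalence. It requires an equivariant analogue of the Segal--Shimakawa theorem that a very special $\g$-$G$-space spectrifies to a genuine $\Omega$-$G$-spectrum, together with careful control of the prolongation functor $\E$ under $H$-fixed points, smashes with representation spheres, and transfinite colimits. Once this identification is established, the remaining model category axioms and the explicit descriptions of fibrations and generating acyclic cofibrations follow directly from Hirschhorn's machinery.
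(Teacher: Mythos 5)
Your overall strategy---obtaining the stable structure as a left Bousfield localization of the level structure of Theorem 2.1---is the route the paper itself gestures at (the paper gives no argument beyond citing \cite{RS} Lemma A.4 and \cite{Ostermayr}), so in spirit you are aligned with the intended proof. However, your execution has a genuine gap centered on the localizing set $S$. Locality at a map $A \to B$ is a condition on mapping spaces \emph{out of} $A$ and $B$, and while $\text{Map}(G\g_T \wedge (G/H)_+, X) \cong X(T)^H$ by an enriched Yoneda argument, there is no adjunction that computes maps out of $\Omega^V\bigl(G\g_T \wedge (G/H)_+ \wedge S^V\bigr)$ in terms of $X$; consequently your claim that the $S$-local objects are precisely those whose prolongation is a genuine equivariant $\Omega$-spectrum does not follow for this choice of $S$. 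Worse, it is not even clear that the maps in your $S$ are themselves stable equivalences: the prolongation $\E$ is a left Kan extension and does not commute with the right adjoint $\Omega^V$, so identifying the associated spectrum of $\Omega^V(A \wedge S^V)$ with that of $A$ already requires the equivariant connectivity and delooping estimates you are deferring. The standard repair is either to localize at maps built from representables by left-adjoint operations only (equivariant Segal maps for all finite pointed $G$-sets $T$, orbits $G/H$, together with a map forcing group-likeness), or to abandon an explicit cell set and run a Bousfield--Friedlander $Q$-functor argument, which is in effect what your third step begins to do.

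The second, related problem is that the actual content of the theorem---that the localized weak equivalences coincide with the stable equivalences defined via $\pi_*^H$-isomorphisms of $\E X(S^V)$---is exactly the step you label the ``main obstacle'' and leave unproved. That identification needs Shimakawa's equivariant Segal theorem (a very special equivariant $\g$-space prolongs to a genuine $\Omega$-$G$-spectrum), compatibility of $\E$ with $H$-fixed points, smashing with representation spheres and filtered colimits, and the verification that your iterated $\Omega^V \E(S^V \wedge -)$ construction is simultaneously a stable equivalence and lands in local objects. Until that is done, what you have constructed is a localization at some set $S$ whose weak equivalences have not been identified with the stable equivalences, so the theorem as stated is not yet established; the formal Hirschhorn steps (left properness, cellularity of the level structure) are indeed the routine part.
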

\begin{thm} 
Let $G\g[G \cT]$ have the stable model structure. Then the  category $\g[G\cT]$ forms a  cofibrantly generated model category with the following model structure;
\begin{itemize}
\item a morphism $X \to Y$  in $\g[G\cT]$ is a weak equivalence if  the morphism $\E X \to \E Y $ is a weak equivalence  in $G \g [G \cT]$,
\item a morphism $X \to Y$  in $\g[G\cT]$ is a fibration if  the morphism $\E X \to \E Y $ is a fibration in $G \g [G \cT]$ and,
\item a morphism $X \to Y$  in $\g[G\cT]$ is a cofibration if  it has left lifting property with respect to all acyclic fibrations.
\end{itemize}
The fibrant objects will be very-special equivariant $\g$-spaces.
\end{thm}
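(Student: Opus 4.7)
\emph{Proof plan.} The approach is to apply Kan's transfer (lifting) theorem, transferring the stable model structure on $G\g[G\cT]$ from the previous theorem to $\g[G\cT]$ via the adjunction between $\g[G\cT]$ and $G\g[G\cT]$ induced by the inclusion $i \colon \g \hookrightarrow G\g$. In this transfer, prolongation $\E$ plays the role of the detecting functor.

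First, I would take the generating cofibrations $I'$ and generating acyclic cofibrations $J'$ for $\g[G\cT]$ to be the images under the adjoint to $\E$ of the sets $I$ and $J$ from the previous theorem. By the hom-set isomorphism of the adjunction, a map $f$ in $\g[G\cT]$ has right lifting property with respect to $I'$ (resp.\ $J'$) if and only if $\E f$ has right lifting property with respect to $I$ (resp.\ $J$) in $G\g[G\cT]$. This identifies the fibrations and acyclic fibrations with those maps $f$ for which $\E f$ is a stable fibration (resp.\ acyclic fibration), exactly as claimed; the cofibrations are then the $I'$-cofibrations, which is the stated LLP condition.

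Next, I would verify the hypotheses of Kan's transfer theorem. Smallness of the domains of $I'$ and $J'$ follows from the corresponding smallness in $G\g[G\cT]$, since the adjoint to $\E$ preserves the filtered colimits involved in the small object argument. The principal obstacle is the \emph{acyclicity condition}: every relative $J'$-cell complex in $\g[G\cT]$ must be sent by $\E$ to a stable equivalence in $G\g[G\cT]$. Because the adjoint to $\E$ preserves the pushouts and transfinite compositions in question, this reduces to showing that relative $J$-cell complexes in $G\g[G\cT]$ are stable equivalences, which is given by the previous theorem.

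Finally, for the characterisation of fibrant objects: $X \in \g[G\cT]$ is fibrant iff $X \to \ast$ is a fibration, iff $\E X \to \ast$ is a stable fibration in $G\g[G\cT]$, iff $\E X$ is stably fibrant. A stably fibrant object in the projective stable model structure on $G\g[G\cT]$ is characterised, as for any $\Omega$-spectrum-type localisation, by being level fibrant together with the Segal map $\E X(S) \to G\cT(S,\E X(\b 1))$ being a $G$-equivalence for every finite $G$-set $S$ and the monoid $\pi_0(X(\b 1)^H)$ being a group for each subgroup $H<G$. These are precisely the conditions for $X$ to be a very-special equivariant $\g$-space, which identifies the fibrant objects as claimed.
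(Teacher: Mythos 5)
Your proposal follows essentially the same route as the paper: the paper also transfers the stable model structure on $G\g[G\cT]$ to $\g[G\cT]$ across the $(\U,\E)$ adjunction, taking $\U I$ and $\U J$ as the generating sets and invoking its appendix lifting theorem (a cofibration-hypothesis formulation of Kan's transfer theorem) exactly where you invoke Kan's theorem. Your additional remarks on smallness, acyclicity, and the identification of fibrant objects with very-special equivariant $\g$-spaces only spell out details the paper's proof leaves implicit, so the two arguments coincide in substance.
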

\begin{proof} 
Let $\U $ denote the forgetful functor adjoint to $E$. The collection of maps $UI$ and $UJ$ satisfy the cofibration hypothesis for $\sA = \cC=\g[G\cT]$. The proof follows from Theorem \ref{lem}.
\end{proof}

Alternately we could describe the stable model structure on the category of equivariant $\g$-spaces by localizing the projective level model structure on it with respect to stable equivalences \cite{RS}[Lemma  A.4] as described in \cite{Ostermayr}. 

\section{Equivariant Orthogonal spectra}
\begin{defn}\cite{Schwede}
An $G$-orthogonal spectrum is a sequence of pointed $G$-spaces $X_n$ with base point preserving $O(n)$ action and based $G$-maps $ \sigma_n : X_n \wedge S^1 \to X_{n+1}$  such that the iterated map 
$X_n \wedge S^{m}\to X_{m+n}$ is $O(n) \times O(m)$ equivariant. A morphism of $G$ orthogonal spectra $X$ and $Y$ are  $G$-map from $X_n \to Y_n$ which are compatible with the structure of an equivariant orthogonal spectrum.
Denote the category of equivariant orthogonal spectra by $\oI G\cS$. 
\end{defn}

Note this is not the original definition of equivariant orthogonal spectra. Let $\oI_G$ denote the $G$ topological category of real finite dimensional $G$-inner product spaces with the morphisms having a $G$-conjugation action. Let $\cTg$ denote the category of $G$-spaces with all morphisms and a $G$-conjugation on the morphism spaces.
\begin{defn} \cite{MM}
A $G$-orthogonal spectrum is a $G$ enriched functor from $X : \oI_G \to \cTg$ with $G$-equivariant maps $S^V \wedge X(W) \to X(V \oplus W)$ respecting the functoriality of $X$.  Morphisms are natural transformations respecting the suspension maps. Denote this category $\oI_G \cS$.  

\end{defn}

 As in the case of equivariant $\g$-spaces the two definitions are equivalent. There is a forgetful functor from $U: \oI_G \cS\to \oI G \cS$ and a  prolongation functor $E: \oI G \cS \to \oI_G \cS$ adjoint to each other. Refer to Schwede \cite{Schwede} 
 for the proof of the  equivalence of these categories 
 
 Define $G \oI \cS$ to be the category of orthogonal spectra where the morphisms are given by levelwise $G$-maps, that is, $G\oI \cS(X, Y)= \oI_G\cS(X,Y)^G$. This may confusing as we may also consider spectra in $G\oI[G\cT]$ and denote it by $G \oI \cS$ analogous to the $\g$-space case. We could have alternately proved that the category of spectra in $G\oI[G \cT]$ is a model category with appropriate structure. We can show that the inherited model structure on $\oI G \cS$ would be the same. 
 
 We will denote the prolongation functor from $\oI G \cS$ to $G \oI \cS$ also by $E$.


\begin{thm} \cite[Thm III.2.4]{MM}
The category $G \oI\cS$ is a compactly generated proper $G$-topological category with respect to level equivalences, level fibrations and q-cofibrations.
  \end{thm}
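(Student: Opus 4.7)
The plan is to invoke Kan's recognition principle for cofibrantly generated model categories, mirroring the strategy used above for $G\g[G\cT]$. For each $n \geq 0$, let $F_n : G\cT \to G\oI\cS$ denote the left adjoint to the $n$-th level evaluation functor $X \mapsto X_n$; concretely $F_n A$ is the free orthogonal spectrum generated by $A$ in level $n$. I would take the generating cofibrations to be
$$I = \{F_n(G/H_+ \wedge S^{n-1}_+) \to F_n(G/H_+ \wedge D^n_+) \mid n \in \N,\ H < G\}$$
and the generating acyclic cofibrations to be
$$J = \{F_n(G/H_+ \wedge D^n_+) \to F_n(G/H_+ \wedge (D^n \times I)_+) \mid n \in \N,\ H < G\}.$$

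Next I would verify the hypotheses of the recognition theorem. Compactness of the domains in $G\cT$, together with the fact that each $F_n$ preserves colimits, guarantees that both $I$ and $J$ permit the small object argument. The adjunction $(F_n, \mathrm{ev}_n)$ identifies the $J$-injectives with those maps that are level $G$-fibrations, and the $I$-injectives with those that are in addition level weak equivalences; this reads off fibrations and acyclic fibrations from the standard based $G$-space model structure. The remaining recognition criterion is to show that every $J$-cell complex is a levelwise acyclic $q$-cofibration.

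For properness and the $G$-topological enrichment I would argue levelwise. Level fibrations and level equivalences are detected on each $X_n$, so left and right properness of $G\cT$ in the based setting transfer directly to $G\oI\cS$. Similarly, the pushout-product axiom against $G$-CW inclusions reduces at each spectrum level to the analogous axiom in $G\cT$, which is classical.

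The main obstacle is showing that $J$-cell complexes are level equivalences, since the $k$-th level of $F_n A$ is not $A$ itself but an expression of the form $O(k)_+ \wedge_{O(k-n)} (S^{k-n} \wedge A)$ (for $k \geq n$). Tracking how pushouts of the maps in $J$ interact with the orthogonal group action and the iterated structure maps $\sigma_n$, and verifying that the resulting levelwise maps remain acyclic $G$-cofibrations, is the central calculation. This is precisely what is carried out in the proof of [MM, Thm III.2.4], and the argument adapts verbatim from the non-equivariant case once one incorporates the families of subgroups $H < G$ into the cells.
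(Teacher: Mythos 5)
The paper itself gives no argument for this statement: it is quoted directly from Mandell--May \cite{MM}, Theorem III.2.4, so your sketch is in effect reproducing their proof, and it is against their argument that it must be measured. Measured that way, there is a genuine equivariant gap in your choice of generating sets. In $G\oI\cS$ the levels are indexed on $G$-representations $V$, and this is what the paper uses later: level equivalences and fibrations are tested on $X_V^H$ for all $H<G$ and all $V\in G\oI$. For an $n$-dimensional representation $V$, the $G$-space $X(V)$ is obtained from $X_n$ by twisting through the homomorphism $H\to O(n)$ determined by $V$, so $X(V)^H$ is the fixed-point space of $X_n$ under the corresponding graph subgroup of $G\times O(n)$. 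Your generating sets use only the free functors $F_n$ applied to cells $G/H_+\wedge S^{k-1}_+$ carrying a trivial $O(n)$-action; by adjunction the maps with the right lifting property against your $J$ are exactly those whose trivial levels $X_n\to Y_n$ are fibrations on $H$-fixed points for $H<G$. That is strictly weaker than being a level fibration in the sense of the theorem (a fibration at every level $V$, equivalently on all graph-subgroup fixed points), so the recognition-theorem argument as you set it up produces a coarser, ``naive'' level structure rather than the one asserted, and not one compatible with the stable structure invoked afterwards, whose fibrant objects are genuine $\Omega$-$G$-spectra.

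The repair is exactly what Mandell--May do: take the generating (acyclic) cofibrations to be $F_V i$, where $V$ runs over a skeleton of the indexing $G$-representations and $i$ over the cells $(G/H\times S^{k-1})_+\to (G/H\times D^k)_+$, or equivalently keep trivial levels but allow cells for all graph subgroups $\Lambda\leq G\times O(n)$. (A minor point: you also use $n$ simultaneously for the spectrum level and the cell dimension.) With the generating sets corrected, the rest of your outline --- smallness from compactness of the domains, the adjunction argument identifying injectives, the observation that generating acyclic cofibrations are levelwise $G$-homotopy equivalences so that $J$-cell complexes are level acyclic h-cofibrations, and the levelwise verification of properness and the $G$-topological pushout-product axiom --- does follow the lines of \cite{MM}.
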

\begin{thm} \cite[Thm III.4.2]{MM}
 The category $G\oI\cS$ is a compactly generated proper $G$-topological model category with respect to $\pi_\ast$-isomorphisms, q-fibrations and q-cofibrations. The fibrant objects are the $\Omega$-$G$-spectra. 
 \end{thm}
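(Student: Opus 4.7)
The plan is to obtain the stable structure as a mixed Bousfield-type localization of the level model structure from the previous theorem, following the strategy of \cite{MM}. Keep the cofibrations unchanged as q-cofibrations, declare the weak equivalences to be $\pi_\ast$-isomorphisms, and define q-fibrations by the right lifting property against q-cofibrations that are also $\pi_\ast$-isomorphisms. The generating q-cofibrations remain those of the level structure; the generating acyclic q-cofibrations $K$ are obtained by enlarging the level generating acyclic set $J$ from Theorem III.2.4.

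Next, I would adjoin to $J$, for each pair $V \subseteq W$ of finite-dimensional $G$-inner product spaces in $\cU$ and each subgroup $H<G$, mapping cylinder factorizations of the shifted-adjoint maps $\lambda_{V,W}: F_W(S^{W-V}) \wedge (G/H)_+ \to F_V(S^0) \wedge (G/H)_+$, where $F_V$ denotes the free orthogonal spectrum functor left adjoint to evaluation at $V$. An orthogonal spectrum $X$ has the right lifting property against every map in $K$ exactly when $X$ is levelwise $G$-fibrant and each adjoint structure map $X(V)^H \to \Omega^{W-V} X(W)^H$ is a weak equivalence; this identifies the fibrant objects as the $\Omega$-$G$-spectra. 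The small object argument applied to $K$ produces functorial factorizations, and lifting plus two-out-of-three follow from standard arguments once acyclicity is in place.

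The main obstacle is the acyclicity step: showing that every q-cofibration generated by $K$ is a $\pi_\ast$-isomorphism, and conversely that a q-fibration which is a $\pi_\ast$-isomorphism is a level acyclic fibration. This reduces to the characterization that a map between $\Omega$-$G$-spectra is a $\pi_\ast$-isomorphism if and only if it is a level $G$-equivalence, which in turn rests on computing each $\pi^H_\ast$ as a colimit over finite-dimensional $H$-subrepresentations of $\cU^H$ and using the adjoint structure maps to collapse that colimit. Properness and $G$-topological enrichment then follow from the corresponding properties of the level structure together with the preservation of $\pi_\ast$-isomorphisms under pushouts along q-cofibrations, a consequence of the long exact sequences for homotopy groups of orthogonal spectra.
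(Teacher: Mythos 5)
The paper gives no argument of its own for this statement---it is quoted directly from Mandell--May \cite[Thm III.4.2]{MM}---so the relevant comparison is with the proof in that reference, and your outline does follow its broad strategy: keep the q-cofibrations, pass from the level structure to the $\pi_\ast$-isomorphisms, enlarge the level generating acyclic set using mapping-cylinder factorizations of the maps $\lambda_{V,W}$, identify the fibrant objects as $\Omega$-$G$-spectra, and get properness from long exact sequences. However, two of your steps are wrong as written. First, the generating acyclic set is not obtained by adjoining the cylinder inclusions smashed with $(G/H)_+$: in \cite{MM} (following \cite{MMSS}) one adjoins to $J$ the pushout-products $i\,\square\,k_{V,W}$, where $k_{V,W}$ is the inclusion of $F_{V\oplus W}S^{W}$ into the mapping cylinder of $\lambda_{V,W}\colon F_{V\oplus W}S^{W}\to F_V S^0$ and $i$ runs over the space-level generating cofibrations $(G/H\times S^{n-1})_+\to (G/H\times D^n)_+$. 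The right lifting property against your maps alone does not force the restriction maps of mapping spaces to be weak $G$-equivalences, so your ``exactly when'' characterization of the fibrant objects does not hold for the set you describe; it is precisely the pushout-product form that makes RLP against $K$ equivalent to being level fibrant with adjoint structure maps $X(V)^H\to(\Omega^{W-V}X(W))^H$ weak equivalences.

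Second, and more seriously, you propose to compute $\pi^H_\ast$ as a colimit over finite-dimensional $H$-subrepresentations of $\cU^H$. The fixed-point universe $\cU^H$ contains only trivial $H$-representations, so that colimit computes the naive homotopy groups. The groups used in \cite{MM} are $\pi^H_q(X)=\mathrm{colim}_{V\subset\cU}\,\pi_q\bigl((\Omega^V X(V))^H\bigr)$, the colimit running over all indexing $G$-representations in the universe. With naive groups the pivotal lemma of your acyclicity step---a $\pi_\ast$-isomorphism between $\Omega$-$G$-spectra is a level $G$-equivalence---is false (a naive equivalence of genuine $\Omega$-$G$-spectra need not be an equivalence at levels $X(V)$ with $V$ nontrivial; compare $EG_+\wedge X\to X$), so the localization you describe would yield the naive stable structure and contradict your own identification of the fibrant objects. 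Finally, note that properness needs the dual argument as well: right properness uses the long exact sequences of level fibration sequences under pullback, not only the preservation of $\pi_\ast$-isomorphisms under pushout along q-cofibrations.
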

 
 \begin{thm}
  Let $f: X \to Y$ be a morphism in $\oI G\cS$. Define 
 \begin{itemize}
 \item $f$ to be a weak equivalence (fibration) if $\E X \to\E Y$ is a weak equivalence is a $\pi_\ast$-isomorphism (q-fibration)
 \item and, $f$ to be a cofibration if it has the left lifting property with respect to acyclic cofibrations.
 \end{itemize}
 The category $\oI G \cS$ is a cofibrantly generated model category with weak equivalences, fibrations and cofibrations defined as above. Fibrant objects are $\Omega$-$G$-spectra. \end{thm}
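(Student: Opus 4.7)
The strategy is to transfer the stable Mandell--May model structure on $G\oI\cS$ to $\oI G\cS$ along the adjunction $\E : \oI G\cS \rightleftarrows G\oI\cS : \U$, in exact parallel with the $\g$-space theorem proved in the previous section. The candidate generating cofibrations and generating acyclic cofibrations of $\oI G\cS$ are $\U I$ and $\U J$, where $I$ and $J$ are the Mandell--May generators of $G\oI\cS$. Given these generators, the fibrations and acyclic fibrations of $\oI G\cS$ are by definition the maps with right lifting property against $\U J$ and $\U I$ respectively, and an adjunction argument identifies them with the maps $f$ such that $\E f$ is, respectively, a q-fibration and an acyclic q-fibration in $G\oI\cS$; this matches the characterization given in the statement.

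The first technical step is to verify the cofibration hypothesis of \cite{RS} for $\sA=\cC=\oI G\cS$: the domains of $\U I$ and $\U J$ are small relative to transfinite compositions of pushouts of maps in $\U I$ (resp.\ $\U J$), and relative $\U I$-cell complexes are closed inclusions on each level. This transfers cleanly from the corresponding properties in $G\oI\cS$, since $\U$ is just precomposition with an inclusion of indexing categories and hence preserves levelwise colimits and smallness. With the cofibration hypothesis in hand, the small object argument supplies the two functorial factorizations required for the model structure.

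The crux --- and the expected main obstacle --- is to show that every relative $\U J$-cell complex in $\oI G\cS$ is a stable equivalence, i.e.\ becomes a $\pi_*$-isomorphism after applying $\E$. Since $\E$ is a left adjoint, it sends such a complex to an $\E\U J$-cell complex in $G\oI\cS$, so it suffices to show that $\E\U j$ is an acyclic cofibration for each $j \in J$; this reduces to an analysis of $\E\U$ on the free generators $F_V(G/H_+ \wedge -)$ used by Mandell--May, where one can compute the counit explicitly and identify the result with an acyclic cofibration. Closure of acyclic cofibrations under cobase change and transfinite composition in the compactly generated model category $G\oI\cS$ then yields the desired $\pi_*$-isomorphism, and invoking the transfer principle cited in the $\g$-space proof concludes the model structure. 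Finally, fibrancy of $X$ in $\oI G\cS$ means $\E X$ is fibrant in $G\oI\cS$, i.e.\ an $\Omega$-$G$-spectrum; by the equivalence $\oI G\cS \simeq \oI_G \cS$ recalled above, this identifies the fibrant objects as $\Omega$-$G$-spectra in $\oI G\cS$ as well.
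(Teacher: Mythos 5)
Your proposal takes essentially the same route as the paper: transfer the Mandell--May stable model structure on $G\oI\cS$ to $\oI G\cS$ along the prolongation/forgetful adjunction, take $\U I$ and $\U J$ as candidate generators, verify the cofibration hypothesis, apply the small object argument via the transfer result (Theorem \ref{lem}), and identify fibrant objects with $\Omega$-$G$-spectra through $\E$. The only real difference is that you spell out the acyclicity step (relative $\U J$-cell complexes are stable equivalences), which the paper leaves implicit in its appeal to Theorem \ref{lem}; that is a welcome addition rather than a divergence in strategy.
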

\begin{proof}
Let $I$ and $J$ denote the generating cofibrations and acyclic cofibrations in $G \oI \cS$. It is easy to verify that that $UI$ and $UJ$ satisfy the cofibration hypothesis and rest of the proof follows from Theorem \ref{lem}.
\end{proof}

 \section{Comparison with equivariant orthogonal spectra }

 Given any $X: \g \to G \cT$ consider its prolongation to $\E X$ to $\cW$. Then $\E X(S^n)$ defines a spectrum. This is because given any functor $X : \cW \to G \cT $ we have a map $ \S^1 \to G\cT(\S^n, \S^{n+1}) \to G  \cT(X(\S^n), X(\S^{n+1}))$. Since $X$ is a functor there is a homomorphism from $O(n) \to \Aut(S^n) \to \Aut(\E X(S^n))$.  This defines an $O(n)$-action compatible with the spectrum $\E X$.  

Define the functor $\cB: \g[G \cT] \to \oI G \cS$ as $\cB X_n=\E X(\S^n)$.


Denote the sphere spectrum by $ \sS$.
Given any equivariant orthogonal spectrum $X$ we can define an equivariant $\g$-space $\cA X$  as follows.  Define $\cA L(\b n ) = \oI \cS(  \sS^n, L). $  to be the $G$-enriched morphism space of equivariant orthogonal spectra between $\sS^n$ and $X$.


Note that $\sS_G= \E \sS$ and the enriched morphism spaces  $\oI_G\cS(\E \sS, \E L)= \oI\cS(\sS, L)$ are equivalent for any equivariant orthogonal spectrum $L$.


If $ X \to Y$ is a level equivalence in $\oI G\cS$ then  $  \E X_V^H \to \E Y_V^H$ is a weak equivalence for all $H<G$ and $V \in G \oI$. Then  it is sufficient to verify that 
$$ H\oI  \cS(  \sS_G^T, \E X ) \to H\oI  \cS(\sS_G^T, \E Y)  $$ is a weak equivalence for all $H<G$ and $G$-sets $T$. This is true because given a finite $G$-CW complex $A$ and  a weak equivalence of  $B \to C$ of $G$-spaces, the map $\cT_G(A,B) \to \cT_G(A,C)$ is a weak equivalence of $G$-spaces.

Let $X_V^H \to Y_V^H $ be level fibrations for all $H<G$ and $V \in G\cI$.  We need to show that the dotted arrow exists in the following diagram 
\begin{equation*}
\xymatrix{  (G/K  \times \D^n )^H\ar[r]\ar[d] & H\oI \cS(S_G^T, X) \ar[d]  \\
(G/K \times \D^n \times I)^H \ar@{-->}[ur] \ar[r] & H\oI \cS(S_G^T, Y) . }
\end{equation*}

By assumption  every $L <H$ dotted arrow exists in the following diagram of spaces. 

\begin{equation}\label{a}
\xymatrix{ (G/K  \times \D^n )^H \times \S_{GV}^T)^L \ar[r]\ar[d] & X_V^L \ar[d]  \\
((G/K \times \D^n \times I)^H  \times \S_{G V}^T)^L\ar@{-->}[ur] \ar[r] & Y _V^L . }
\end{equation}

Therefore the dotted arrow will exist in the following diagram by Elmendorf's theorem. 

\begin{equation*}
\xymatrix{(G/K  \times \D^n )^H\ar[r]\ar[d] & H \cT(S_{G V}^T, L_V) \ar[d]  \\
(G/K \times \D^n \times I)^H \ar@{-->}[ur] \ar[r] & H\cT(S_{GV}^T, K_V) . }
\end{equation*}

More generally if in diagram \ref{a} the maps are maps of spectra then the dotted arrow will also be a map of spectra. Hence we will get the required lift.

Therefore, the functor $\cB$ takes q-cofibrations to q-cofibrations.  Further $\cB$ takes stable equivalences to stable equivalences by construction.  This then implies that $\cA$ maps q-fibrations to q-fibrations. 
\begin{thm} \label{connective}
There exist adjoint pair of functors 
\begin{equation*}
\xymatrix{ \g[G\cT] \ar@<.5ex>[r]^{\cB} & \oI G\cS \ar@<.5ex>[l]^{\cA} }
\end{equation*}
This is a Quillen pair with the stable model structures on the two categories and is a "connective" Quillen equivalence. 
\end{thm}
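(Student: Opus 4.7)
The plan is to verify, in order, (a) that $(\cB, \cA)$ is genuinely an adjoint pair, (b) that it is a Quillen pair for the stable model structures of the preceding sections, and (c) that the derived unit and counit exhibit the \emph{connective} Quillen equivalence.

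For (a), I would check the adjunction by hand from the formulas. A map $f : \cB X \to L$ in $\oI G\cS$ is a compatible family of $G$-maps $\E X(\S^n) \to L_n$ respecting suspension and $O(n)$-actions. By the universal property of the prolongation $\E$ together with the enriched Yoneda lemma applied to $\g_G$, such data are in bijection with natural $G$-maps $X(\b n) \to \oI \cS(\sS^n, L) = \cA L(\b n)$ compatible with the $\g$-action coming from the sphere $\g$-space. Naturality and the triangle identities then follow from the corresponding facts for the adjunction $\E \dashv \U$ together with the identification $\E \sS = \sS_G$ recorded in the text. Step (b) is immediate from the paragraphs just above the statement: $\cB$ preserves q-cofibrations and (by construction) preserves stable equivalences, so in particular it preserves acyclic cofibrations; correspondingly $\cA$ preserves fibrations and acyclic fibrations.

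For (c), I would follow the equivariant Bousfield--Friedlander / Segal strategy, paralleling Ostermayr's simplicial-set argument. The target statement is that for every cofibrant $X \in \g[G\cT]$ and every fibrant connective $L \in \oI G\cS$ (meaning $\pi_n^H(L)=0$ for $n<0$ and all $H<G$), a map $\cB X \to L$ is a stable $G$-equivalence if and only if its adjoint $X \to \cA L$ is. After replacing $X$ by a very-special cofibrant model and $L$ by a positive $\Omega$-$G$-spectrum, both assertions reduce to $\pi_n^H$-comparisons on the nose. The two inputs I would use are: (i) if $L$ is a positive $\Omega$-$G$-spectrum then $\cA L$ is very-special and the counit $\cB \cA L \to L$ induces an iso on $\pi_n^H$ for all $n \geq 0$ and all $H < G$; (ii) if $X$ is cofibrant then the unit $X \to \cA \cB X$ is a stable $G$-equivalence of $\g$-spaces.

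The hard part will be (ii), the equivariant identification of $\pi_*^H(\cB X)$ with the group completion of the $E_\infty$-$G$-space $X(\b 1)$ taken fixed-point-wise. This is the content of the equivariant Segal / Shimakawa recognition principle, worked out in topological spaces rather than simplicial sets, and I would invoke it rather than re-derive it. The restriction to connective $L$ in the equivalence is then forced: since $\cB X$ has only non-negative equivariant stable homotopy, the pair cannot see spectra below their connective cover, but does induce an equivalence between $\mathrm{Ho}(\g[G\cT])$ and the connective part of $\mathrm{Ho}(\oI G\cS)$.
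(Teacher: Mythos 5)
Your outline (adjunction, Quillen pair, then derived unit and counit) is a legitimate route, but it differs from the paper's argument and is heavier than necessary in this setting. The paper invokes Hovey's criterion \cite{MHovey}[Cor.~1.3.16]: one only has to check that $\cB$ reflects weak equivalences between cofibrant objects and that $\cB C\cA Y \to Y$ is a (connective) weak equivalence for fibrant $Y$. The first condition is free here, because stable equivalences in $\g[G\cT]$ are \emph{defined} by applying $\E(-)(S^V)$, i.e.\ essentially by applying $\cB$; so $\cB$ creates weak equivalences and no recognition-principle input is needed on that side. The only substantive point is then your input (i): for fibrant $Y$ the $\g$-space $\cA Y$ is very special, hence $\cB\cA Y$ is a connective $\Omega$-spectrum, and since $\cB\cA Y_0=\cA Y(\b 1)=\oI_G\cS(\sS,\E Y)\to Y_0$ is a $G$-weak equivalence for an $\Omega$-$G$-spectrum $Y$, the counit is a stable equivalence onto the connective part. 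That single use of the Shimakawa/Segal machine (very special implies $\Omega$-spectrum) is all the paper needs.

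The concrete gap is in your step (ii). First, as stated the unit $X\to\cA\cB X$ is underived: $\cA$ is a right Quillen functor and $\cB X$ is rarely fibrant, so you must replace $\cB X$ by an $\Omega$-$G$-spectrum before applying $\cA$; otherwise the claim can fail. Second, and more seriously, the group-completion identification of $\pi_\ast^H(\cB X)$ with the group completion of $\pi_0(X(\b 1)^H)$ is a statement about \emph{special} $\g$-$G$-spaces, whereas cofibrant objects of the stable model structure need not be special, so invoking the equivariant recognition principle does not directly establish (ii) for all cofibrant $X$. In the present model structure this detour is avoidable: once the counit statement is proved, the derived unit $X\to\cA R\cB X$ is automatically a stable equivalence, because a map of $\g$-spaces is a stable equivalence exactly when $\cB$ takes it to a $\pi_\ast$-isomorphism, and $\cB$ applied to the derived unit is identified (up to homotopy) with the connective-cover comparison $\cB X\to R\cB X$. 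Repairing your plan therefore amounts to replacing the appeal to group completion by the observation that weak equivalences in $\g[G\cT]$ are created by $\cB$, which is precisely what the paper exploits.
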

\begin{proof}

From our previous discussion it is clear that $(\cA, \cB)$ are a Quillen pair.

To prove the theorem by \cite{MHovey}[Cor. 1.3.16] it is sufficient to show that  $\cB$ reflects weak equivalences between cofibrant objects and that the map $\cB Q\cA Y \to Y $ is a connective weak equivalence for all fibrant objects $Y$. 

If $\cB X \to \cB Y$ is a stable  weak equivalence of equivariant orthogonal spectra then by construction $X \to Y$ is a stable equivalence. 

Note for any equivariant orthogonal spectrum $Y$, $$\E \cA Y(T)= \oI_G \cS(\S_G^T, \E Y) \xrightarrow{\sim} \cT_G( T, \oI_G\cS(\S_G, \E Y) ).$$
Any fibrant equivariant orthogonal spectrum $Y$ is an equivariant $\Omega$-spectrum and hence $\cA Y$ is a very-special equivariant $\g$-space.  If $\cA X \to \cA Y$ is a stable equivalence for fibrant equivariant orthogonal spectra $X$ and $Y$ then this implies that $\cA X(\b 1) \to \cA Y(\b 1)$ is weakly equivalent, which implies $X\to Y$ is a stable equivalence. Hence $\cA$ reflects weak equivalences on fibrant objects. 

Let $Y$ be a fibrant equivariant orthogonal spectrum and taking a cofibrant-fibrant replacement in $\g[G\cT]$ we get a cofibrant very-special equivariant $\g$-space $C \cA Y$ stably equivalent to $\cA Y$.   
Note that $C \cA Y \to \cA Y $ is a stable equivalence implies that $ \E \cB C \cA Y \to \E \cB \cA Y$ is a stable equivalence. But $\cA Y$ is a very special equivariant space and hence $\cB\cA Y $ is an $\Omega$-spectrum. Note that $\cB\cA Y_0= \cA Y(\b 1) = \oI_G(\sS, Y)$. Since $Y$ is an $\Omega$-spectrum and $\cB \cA Y_0\to Y_0$ is a $G$-weak equivalence, $\cB \cA Y\to Y$  is a stable equivalence. Hence $\cB C \cA Y \to Y$ is a weak equivalence. 
Thus we have a connective Quillen equivalence.

\end{proof}

\subsection{Some remarks on $G$-symmetric monoidal categories}
\begin{defn}
Define a symmetric monoidal $G$-category to be a  category $\cC$ with a $G$-action and a symmetric monoidal structure $\otimes$ which commutes with the $G$-action.
\end{defn}

\begin{ex}
\begin{enumerate}
\item The category $\cF_G$ of finite $G$-sets with all set morphisms  with a trivial action on objects and a conjugation  $G$-action on the morphisms with disjoint union giving the symmetric monoidal structure.
\item Let $\cV_G$ ($\cW_G$) be the category with $G$-trivial discrete object space  of all real  (complex) $G$-representations and morphism $G$-space of all vector space morphisms  and a conjugation  $G$-action on the set of  morphisms. The $G$-symmetric monoidal structure is given by direct sum. 
\item Let $R$ be a commutative ring . Let $\cR_G$ denote the category of all $R[G]$-modules where the object space is discrete and has a trivial $G$-action and  the morphism space is the space of a all $R$-module homomorphisms with a $G$-conjugation action. The symmetric monoidal structure is given by direct sum.\end{enumerate}

\end{ex}
Segal \cite{Segal} gave a functor from the category of small symmetric monoidal categories to $\g$-spaces. The equivariant analogue was worked out by Shimakawa \cite{Shimakawa}.

Let $\Gcat$ denote the category symmetric monoidal $G$-catgeories. The $hat$ construction starts with a  symmetric monoidal $G$-category and constructs a $\g$-$G$-category. Then the functor $\h \cC: \g \to \Gcat$ defined as $\h \cC(\b n)= |\h\cC(n)|$ in \cite{Shimakawa}  defines a equivariant $\g$-space after applying the geometric realisation functor.  In particular, if the symmetry morphism in $\cC$ was identity then this will be a special $\g$-$G$-space.  

In general, however this is not the case and one way to  obtain special $\g$-$G$-spaces is to replace such a $G$-category by a   $G$-equivalent $G$-category. Shimakawa's main idea is to apply the hat construction to $\Hom(\u{EG}, \cC)$ where $\cC$ is a symmetric monoidal $G$-category and   $\u{EG} $ denotes the category with object set $G$ and a  unique morphism $\u{EG}(g_1,g_2)$ given by $g_2(g_1)^{-1}$. 

 The objects of the $G$-category $\text{Hom}_G(\u{EG}, \cC)$ are $G$-tuples  of isomorphic objects in  the category $\cC$. The $G$-action is via the $G$-action on $\u{EG}$ herefore, the action on the objects is via permutation of the $G$-tuples. The monoidal structure is  induced by the monoidal structure on $\cC$.

Let  $\cD$ be a  symmetric monoidal  category with a trivial $G$-action. Then the symmetric monoidal structure of $\cD$ induces a $G$-symmetric monoidal structure on $\cD_G $, the category of $G$-objects in $\cD$ with a conjugation action on the morphisms.  There is an adjoint pair between the category $\Hom (\u{EG}, \cD) $ and $\cD_G$ which is a lax $G$-symmetric monoidal functor. It is easy to verify that $\Hom(\u{EG}, \cD)$ is a special $\g$- $G$-category. Therefore, there is a equivariant stable equivalence between the two respective  equivariant $\g$-spaces obtained on applying the hat construction.  This gives an appropriate  special $\g$-$G$-category construction in examples mentioned earlier.

Guillou and May \cite{GuillouMay} also note that naive permuative $G$-categories will not give rise to equivariant $\E_\infty$-spaces. They define genuine permutative  $G$-categories and  show that any such category will give rise to an equivariant $\E_\infty$-space. 
This notion is equivalent to the idea of special $\g$-$G$-categories. A proof can be written down generalising  \cite{RS} for $G$-categories. This will be dealt with elsewhere.

\appendix
\section{}

{\sc Cofibration Hypothesis  \cite[5.3]{MMSS} :}  Let $\sA$ be a topological complete and cocomplete category with a continuous forgetful faithful functor to another topologically complete and cocomplete category $\cC$. Let tensors in $\cC$ be denoted by $X\wedge A$ and homotopies in particular defined using $X \wedge I_{+}$.  Let $I$ be a set of maps in $\sA$. The set $I$ is said to satisfy  the {\it cofibration hypothesis} if the following is true. 
\begin{enumerate}
\item Let $i:A \to B$ be a coproduct of maps in $I$ and $j$ be obtained by a cobase change from $i$ then $j$ is a h-cofibration.
\item Let $A$ be an object in $\sA$ which is a colimit of a sequence of  maps in $\sA$, which are all h-cofibrations when considered as maps in $\cC$. Then $A$ considered as an object of $\cC$ is a colimit of the corresponding maps in $\cC$.
\end{enumerate}

\begin{lemma}\cite[5.8]{MMSS} 
Let $I$ be a set of maps in $\sA$  such that each map in $I$ has a compact domain and $I$ satisfies the cofibration hypothesis. Then maps $f: X \to Y$ in $\sA$ factor functorially as composites 
$$ X \xrightarrow{i} X^\prime \xrightarrow{p} Y$$ such that $p$ satisfies the RLP with respect to any map in $I$ and $i$ satisfies the LLP with respect to any map that satisfies the RLP with respect to each map in $I$. Moreover, $i : X\to X' $ is a relative $I$-cell complex. 
\end{lemma}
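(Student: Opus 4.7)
The plan is to apply the classical small object argument of Quillen, in the transfinite (or sequential) form adapted to the topological setting, exactly as in \cite{MMSS}. The cofibration hypothesis is precisely what is needed to make compact domains behave like small objects against the transfinite composites we will build.

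First I would construct $X'$ as a sequential colimit. Set $X_0 = X$. Given $X_n$ equipped with a map to $Y$, form the set $S_n$ of all commutative squares
\begin{equation*}
\xymatrix{ A \ar[r] \ar[d]_{\iota} & X_n \ar[d] \\ B \ar[r] & Y }
\end{equation*}
with $\iota \in I$, and take the pushout
\begin{equation*}
\xymatrix{ \coprod_{S_n} A \ar[r] \ar[d] & X_n \ar[d] \\ \coprod_{S_n} B \ar[r] & X_{n+1} }
\end{equation*}
inside $\sA$. The map $X_n \to X_{n+1}$ is then a cobase change of a coproduct of maps in $I$, so by part (1) of the cofibration hypothesis it is an $h$-cofibration (when viewed in $\cC$). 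Define $X' = \mathrm{colim}_n X_n$ in $\sA$; the composite map $p \colon X' \to Y$ is induced by the compatible family of maps to $Y$, and $i \colon X \to X'$ is by construction a relative $I$-cell complex.

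Next I would verify that $p$ has the RLP with respect to every map in $I$. Given a square
\begin{equation*}
\xymatrix{ A \ar[r]^{\alpha} \ar[d]_{\iota} & X' \ar[d]^{p} \\ B \ar[r] & Y }
\end{equation*}
with $\iota \in I$, part (2) of the cofibration hypothesis tells us that the colimit defining $X'$ in $\sA$ is the same as the colimit computed in $\cC$ along $h$-cofibrations. Compactness of $A$ in $\cC$ (and hence in $\sA$, since the forgetful functor is faithful and continuous) then forces $\alpha$ to factor through some $X_n$. That square is therefore a member of $S_n$, so by construction there is a morphism $B \to X_{n+1} \to X'$ giving the desired lift. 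Functoriality of the construction is clear because at every stage the indexing set of squares is determined functorially by $f \colon X \to Y$.

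Finally, the LLP of $i$ with respect to every map having the RLP against all of $I$ follows from the standard retract/pushout stability of relative $I$-cell complexes: pushouts of coproducts of maps in $I$ have this LLP, and transfinite compositions of maps with the LLP against a fixed class again have the LLP. The genuine obstacle is the factoring step in the previous paragraph: one has to know that ``compact domain'' interacts correctly with the sequential colimit, and this is not automatic in $\sA$ since we only assume compactness through the forgetful functor to $\cC$. The whole point of the cofibration hypothesis is to bridge this gap: condition (1) supplies $h$-cofibrations at each stage, and condition (2) promotes them to an $h$-cofibration sequence in $\cC$ whose colimit agrees with the one in $\sA$, so that compactness can actually be applied. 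Once this is in hand, the rest of the argument is formal.
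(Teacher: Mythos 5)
The paper gives no proof of this lemma at all---it is simply quoted from \cite[5.8]{MMSS}---and your proposal is the standard Quillen small object argument, which is exactly the route taken in that cited source, so the approach matches. One minor imprecision worth noting: compactness should be invoked for maps in $\sA$ out of the domain into the sequential colimit, with part (2) of the cofibration hypothesis guaranteeing that this colimit is a colimit along $h$-cofibrations when viewed in $\cC$; it is not deduced ``in $\sA$'' merely from faithfulness and continuity of the forgetful functor as your parenthetical suggests, though this does not affect the overall structure of the argument.
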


Note a map $i: X \to X'$ is a relative $I$-cell complex if $X=Y_0$ and $i$ obtained by a colimit of a sequences of maps $Y_n \to Y_{n+1}$ where each of them is a obtained from a cobase change from a coproduct of maps in $I$.

\begin{thm}\label{lem}
Let $\sA$ and $\sB$ be topological complete and cocomplete categories and $\xymatrix{ \sA\ar@<.5ex>[r]^{F} & \sB \ar@<.5ex>[l]^{E} }$
  be an adjoint pair. Let $\sB$ be a cofibrantly generated model category with $I$ and $J$ as the generating cofibrations and trivial cofibrations.  Let $FI$ and $FJ$ satisfy the cofibration hypothesis. 

Then $\sA$ is a cofibrantly generated model category where $X \to Y$ is a weak equivalence or fibration if $E X \to EY$ is a weak equivalence of fibration in $\sB$ respectively and the cofibrations are all morphisms which have LLP with respect to all acyclic fibrations in $\sA$.
\end{thm}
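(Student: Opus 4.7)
The plan is to carry out the standard right-transfer of the cofibrantly generated model structure on $\sB$ along the adjunction $F \dashv E$, invoking the Lemma of the appendix to run the small object argument in the topologically enriched setting. First I would handle the easy axioms: weak equivalences and fibrations in $\sA$ are defined as the preimages under the right adjoint of the corresponding classes in $\sB$, hence inherit 2-out-of-3 and closure under retracts from $\sB$, while cofibrations are closed under retracts, pushouts, coproducts, and transfinite composition automatically, being defined by a lifting property.

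Next, the adjunction translates lifting problems across the pair: a map $p$ in $\sA$ has the RLP against $FI$ (respectively $FJ$) if and only if its image in $\sB$ has the RLP against $I$ (respectively $J$), and by cofibrant generation of $\sB$ this is equivalent to $p$ being an acyclic fibration (respectively a fibration) in $\sA$. Since $FI$ and $FJ$ satisfy the cofibration hypothesis and inherit compact domains through $F$, the Lemma of the appendix yields two functorial factorizations of any $f : X \to Y$ in $\sA$: one as $f = pi$ with $i$ a relative $FI$-cell complex and $p$ an acyclic fibration (so $i$ is a cofibration by its LLP characterization), and one as $f = qj$ with $j$ a relative $FJ$-cell complex and $q$ a fibration; the map $j$ has LLP against all fibrations and in particular is a cofibration.

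The main obstacle will be to show that every relative $FJ$-cell complex $j$ is in addition a weak equivalence in $\sA$. Here the cofibration hypothesis enters essentially: part (1) ensures that each cobase change of a coproduct of $FJ$-maps is an h-cofibration in $\sA$, so $j$ is a transfinite composition of h-cofibrations, while part (2) lets this colimit be computed in the underlying topologically enriched category $\cC$. Since each attached cell arises from a generating acyclic cofibration of $\sB$ via $F$, one can match pushouts in $\sA$ stage by stage with the analogous pushouts in $\sB$ and invoke the standard topological fact that pushouts of weak equivalences along h-cofibrations, together with transfinite compositions of h-cofibrations that are weak equivalences, remain weak equivalences; this gives that $j$ is a weak equivalence.

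Granted this key lemma, the remaining lifting axiom of acyclic cofibrations against fibrations follows by the usual retract trick: given an acyclic cofibration $u$, factor it as $u = qj$ via the second factorization above; since $u$ and $j$ are weak equivalences, so is $q$ by 2-out-of-3, hence $q$ is an acyclic fibration; then $u$ lifts against $q$ (since $u$ is a cofibration), exhibiting $u$ as a retract of $j$ and so inheriting the LLP against all fibrations. The generating cofibrations and generating acyclic cofibrations of $\sA$ are then $FI$ and $FJ$ by construction.
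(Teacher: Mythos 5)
You take essentially the same route as the paper: weak equivalences and fibrations are created by the right adjoint, both functorial factorizations come from the small object argument via the appendix Lemma applied to $FI$ and $FJ$, and the remaining lifting axiom is proved by the retract trick --- factor a trivial cofibration as a relative $FJ$-cell complex followed by a fibration, argue the fibration is acyclic by two-out-of-three, and exhibit the trivial cofibration as a retract of the cell complex. The handling of the elementary axioms and the reflection of lifting problems across the adjunction is the same as in the paper.

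The one substantive difference is that you state explicitly the acyclicity claim that every relative $FJ$-cell complex is a weak equivalence in $\sA$; the paper's proof uses this silently (it is exactly what the decoration $A \xrightarrow{\sim} A'$ assumes). Isolating it is an improvement, but your justification of it is the weak point of the proposal: you suggest matching the pushouts in $\sA$ stage by stage with the analogous pushouts in $\sB$, yet the weak equivalences of $\sA$ are created by the right adjoint, which need not preserve pushouts or transfinite compositions, and it is not even automatic from the stated hypotheses that a map in $FJ$ is a weak equivalence in $\sA$ (that would require $EFj$ to be a weak equivalence in $\sB$). What the cofibration hypothesis actually supplies, following the pattern of MMSS, is that cobase changes of coproducts of maps in $FJ$ are h-cofibrations and that the relevant colimits can be computed in the underlying category $\cC$; one must still verify, for the particular set $J$ at hand, that these cobase changes are weak equivalences and that weak equivalences survive the resulting sequential colimits. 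For the level structures this holds because the maps of $FJ$ are inclusions of deformation retracts, while for the stable structures a separate argument about stable equivalences and h-cofibrations is needed. So your proposal is no less complete than the paper's own proof --- indeed it names the gap the paper elides --- but the acyclicity step should be either added as a hypothesis of the theorem or checked in each application, rather than deduced from the cofibration hypothesis alone as your sketch suggests.
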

\begin{proof}
The category $\sA$ is complete and cocomplete. The model category axioms namely two out of three, the property of retracts and one of the lifting properties will follow from definition of the model structure and adjointness of $E$ and $F$. 

The factorization theorem will follow from the small object argument. We need to prove that all  acyclic cofibrations in $\sA $ have a left lifting property with respect to all fibrations. 

It is then sufficient to show that every cofibration is a relative $FI$-cell complex. Let $A \to B$ be a trivial cofibration. Then by the small object argument it can be written as $A \xrightarrow{\sim} A' \xrightarrow{\sim} B$ where $A \to A'$ is a relative $FJ$-complex and $ A' \to B$ is a fibration. By our definition, $A \to B$ will have LLP with respect to $ A' \to B$ and therefore there exists a map $B \to A' $ which when composed with the map $A'\to B$ gives identity. 

Then $A \to B $ is a retract of the map $A \to A'$ and hence has a LLP with respect to all  fibrations in $\sA$. 

\end{proof}
\bibliographystyle{amsplain}

\end{document}